\newtheorem{theorem}{Theorem}
\newtheorem{lemma}[theorem]{Lemma}
\newcommand{\RR}{\mathbb{R}}
\newcommand{\NN}{\mathbb{N}}
\newcommand{\expectation}{\operatorname{\mathbb{E}}}
\newcommand{\suchthat}{\mathrel{:}}
\newcommand{\abs}[1]{\lvert#1\rvert}
\newcommand{\vol}{\operatorname{vol}}
\newcommand{\conv}{\operatorname{conv}}
\newcommand{\interior}{\operatorname{int}}
\newcommand{\norm}[1]{{\lVert#1\rVert}}
\newcommand{\norms}[1]{{\lVert#1\rVert}^2}
\newcommand{\inner}[2]{#1\cdot #2}
\def\final{0}  
\def\final{1}  
\newcommand{\lnote}[1]{[{\small Luis: \bf #1}]\marginpar{*}}
\newcommand{\anonnote}[1]{[{\small anon: \bf #1}]\marginpar{*}}
\newcommand{\sidecomment}[1]{\marginpar{\tiny #1}}
\newcommand{\details}[1]{[[#1]]}
\newcommand{\lnote}[1]{}
\newcommand{\anonnote}[1]{}
\newcommand{\sidecomment}[1]{}
\newcommand{\details}[1]{}
\newcommand{\linspan}{\operatorname{span}}
\newcommand{\ones}{\ensuremath{\mathbbm{1}}}
\newcommand{\e}{\expectation}
\newcommand{\aff}{\operatorname{aff}}
\newcommand{\ud}{\mathrm{d}}
\newcommand{\distD}{\mathcal{D}}
\newcommand{\email}[1]{\href{mailto:#1}{\texttt{#1}}}
\newcommand{\rec}{\operatorname{rec}}
\newcommand{\dom}{\operatorname{dom}}
\newcommand{\evaluatedat}[2]{\left.{#1}\right\rvert_{#2}}
\title{A simplicial polytope that maximizes the isotropic constant must be a simplex}
\author{Luis Rademacher\\
Computer Science and Engineering\\
Ohio State University\\
\email{lrademac@cse.ohio-state.edu}}
\date{}
\begin{document}
\maketitle

\begin{abstract}
The isotropic constant $L_K$ is an affine-invariant measure of the spread of a convex body $K$. For a $d$-dimensional convex body $K$, $L_K$ can be defined by $L_K^{2d} = \det(A(K))/(\mathrm{vol}(K))^2$, where $A(K)$ is the covariance matrix of the uniform distribution on $K$. It is an outstanding open problem to find a tight asymptotic upper bound of the isotropic constant as a function of the dimension. It has been conjectured that there is a universal constant upper bound. The conjecture is known to be true for several families of bodies, in particular, highly symmetric bodies such as bodies having an unconditional basis. It is also known that maximizers cannot be smooth. 

In this work we study the gap between smooth bodies and highly symmetric bodies by showing progress towards reducing to a highly symmetric case among non-smooth bodies. More precisely, we study the set of maximizers among simplicial polytopes and we show that if a simplicial polytope $K$ is a maximizer of the isotropic constant among $d$-dimensional convex bodies, then when $K$ is put in isotropic position it is symmetric around any hyperplane spanned by a $(d-2)$-dimensional face and the origin. By a result of Campi, Colesanti and Gronchi, this implies that a simplicial polytope that maximizes the isotropic constant must be a simplex.

%
\end{abstract}

\section{Introduction}

Let a $d$-dimensional \emph{convex body} be a compact convex set in $\RR^d$ with non-empty interior. 
For a $d$-dimensional convex body $K$, let $A(K)$ be the covariance matrix of the uniform distribution on $K$, that is, for $X$ random in $K$, let $\mu_K = \e (X)$ and let $A(K) = \e_K \bigl((X-\mu_K)(X-\mu_K)^T\bigr)$. 
The \emph{isotropic constant} $L_K$ of $K$ can be defined by $L_K^{2d} = \det\bigl(A(K)\bigr)/(\mathrm{vol}(K))^2$. 
An outstanding open problem in asymptotic geometric analysis, the \emph{slicing problem}, is to find a tight asymptotic upper bound of the isotropic constant as a function of the dimension only. 
The \emph{slicing conjecture} (also known as the \emph{hyperplane conjecture}) states that there is a universal constant upper bound of the isotropic constant.
The slicing problem seems to be mentioned for the first time by Bourgain \cite{Bourgain1986} and some equivalent formulations are discussed by Ball in \cite{Ball1988}. Milman and Pajor \cite{MilmanPajor} studied the problem systematically. It is one of the outstanding open problems in convex geometry; among the reasons are its connections with classical problems in convexity, like the Busemann-Petty problem and Sylvester's problem \cite{MilmanPajor, Giannopoulos, Giannopoulos2}.

The isotropic constant is invariant under affine transformations. About the asymptotic behavior, it is known that $L_K$ is bounded below by a universal constant and this is tight \cite{Bourgain1986}. Bourgain also showed that $L_K \leq O( \sqrt[4]{d} \log d)$. The best known upper bound at this time is by Klartag, who showed that $L_K \leq O(\sqrt[4]{d})$ \cite{Klartag}. 

A natural question is to understand bodies that maximize $L_K$ in every dimension. We review now some results that give a partial understanding of the nature of maximizers. There are subfamilies of all convex bodies where universal constant upper bounds to $L_K$ are known: 
\begin{itemize}
\item zonoids, that is, limits of zonotopes (which are Minkowski sums of segments),

\item bodies having an unconditional basis, that is, convex bodies $K$ such that $(x_1, \dotsc, x_d) \in K \iff (\abs{x_1}, \dotsc, \abs{x_d}) \in K$. \cite{Bourgain1986, MilmanPajor}
\end{itemize}
Also, \cite{MR1763244} showed (using RS-movements) that if $K$ has a non-empty subset of its boundary of class $C^2$ with positive principal curvatures, then it cannot be a maximizer of $L_K$. It also showed using Blaschke's Sch\"uttelung (shakedown) that if $K = \{ (x,y) \in \RR^{d-1} \times \RR \suchthat x \in \pi_H(K), -f(x) \leq y \leq f(x)\}$ (i.e. $K$ has a hyperplane of symmetry) and $f$ is not affine, then $K$ is not a maximizer of the isotropic constant (see Theorem \ref{thm:ccg} in this paper for a formal statement).

The results above tell us that for bodies having certain symmetries or having a smooth boundary we have some understanding of the behavior of the isotropic constant. This leaves unexplored bodies that are not smooth and have no symmetries. Can we argue that maximizing bodies must have certain symmetries?

A polytope is \emph{simplicial} if every facet is a simplex. The first result of this paper is the following necessary condition on simplicial polytopes that are local extrema of the isotropic constant, Theorem \ref{thm:symmetry}.
We first recall a few definitions and basic facts.
In the context of this paper, a $d$-dimensional convex body is a local extremum of a functional iff the body is a local maximum or a local minimum of the functional in the space of all convex bodies with respect to the Hausdorff topology. A $d$-dimensional polytope is called isohedral or a isohedron iff it is facet-transitive, that is, for any pair of facets $F_1, F_2$ there is an element of the (full) symmetry group of the polytope that maps $F_1$ to $F_2$. (The full symmetry group of a polytope is the group (under composition) of all isometries of $\RR^d$ that map the polytope to itself.) A $d$-dimensional convex body $K$ is \emph{isotropic} iff $\mu_K = 0$ and $A(K) = I$. Given any $d$-dimensional convex body, there is an affine transformation that maps it to an isotropic convex body. As the isotropic constant is invariant under affine transformations, it is enough to study it over isotropic convex bodies.
\begin{theorem}\label{thm:symmetry}
Let $P$ be a $d$-dimensional isotropic simplicial polytope that is a local extremum of $P \mapsto L_P$. Let $H$ be any hyperplane spanned by a $(d-2)$-dimensional face of $P$ and the origin. Then $P$ is symmetric around $H$.
In particular, $P$ is isohedral and all facets are congruent.
\end{theorem}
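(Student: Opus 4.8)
\begin{proofidea}
The plan is a variational argument. If $P$ is isotropic and $t\mapsto P_t$ is a smooth family of convex bodies with $P_0=P$, then using $A(P)=I$ and $\mu_P=0$ one computes that the first variation of $\log L_{P_t}^{2d}$ at $t=0$ equals $\vol(P)^{-1}\bigl(I_1-(d+2)V_1\bigr)$, where $I_1$ and $V_1$ are the derivatives at $t=0$ of $\int_{P_t}\norm{x}^2\,dx$ and of $\vol(P_t)$; for a local extremum this must vanish for every admissible two-sided perturbation. Fix a ridge $F$, let $u$ be a unit normal of $H=\aff(F,0)$, and write $H^+,H^-$ for the two closed halfspaces bounded by $H$. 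Since $0\in H$, the reflection $\sigma=\sigma_H$ is a \emph{linear} isometry, so $\sigma(P)$ is again isotropic with $L_{\sigma(P)}=L_P$; the goal is to show $\sigma(P)=P$, which, $\sigma$ being a bijection, is equivalent to $\sigma(P\cap H^+)=P\cap H^-$.

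The first family of admissible perturbations I would use keeps $P\cap H^-$ fixed and applies to $P\cap H^+$ a linear map $S_t=I+t\,wu^{T}$ that fixes $H$ pointwise; for small $\abs t$ the set $(P\cap H^-)\cup S_t(P\cap H^+)$ is a convex polytope and this is a two-sided perturbation, so a change of variables on the moved half turns the first-order condition, after letting $w$ range over $\RR^d$, into the statement that $u$ is an eigenvector of the second-moment matrix $\int_{P\cap H^+}xx^{T}\,dx$, equivalently $\int_{P\cap H^+}\inner{u}{x}\,x\,dx$ is parallel to $u$. The second family is the single-vertex moves $v\mapsto v+tw$, which preserve the simplicial combinatorial type; expanding $\vol P$ and $\int_P\norm{x}^2\,dx$ through the pyramidal decomposition $P=\bigcup_G\conv(G\cup\{0\})$ over facets $G$, and differentiating the standard simplex volume and second-moment formulas, yields at every vertex $v$ a vector identity among the normals of the hyperplanes $\aff(S\cup\{0\})$, where $S$ ranges over the ridges opposite $v$ in the facets containing $v$. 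Here I would use that $F$, being a ridge, lies in exactly two facets $F_1=\conv(F\cup\{v_1\})$ and $F_2=\conv(F\cup\{v_2\})$, which are simplices because $P$ is simplicial, so $F$ is the ridge opposite $v_1$ in $F_1$ and opposite $v_2$ in $F_2$; hence the distinguished normal occurring in those identities is parallel to $u$.

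The core of the proof, and what I expect to be the main obstacle, is to upgrade these conditions to the exact reflection symmetry, since the half-space shear controls only low-order moments of $P\cap H^\pm$, not their shapes. To force first $\sigma(v_1)=v_2$ (equivalently $v_1-v_2\perp H$ with $v_1$ and $v_2$ equidistant from $H$) and then the pairing of all remaining vertices under $\sigma$, I expect one needs a much richer family of perturbations---shears and $u$-translates of the slices of $P$ by hyperplanes parallel to $H$, i.e.\ shadow systems of $P$ in the direction $u$---combined with the vertex identities at $v_1$ and $v_2$. Should first-order information not suffice, the natural reinforcement is the convexity along shadow systems of an $L$-related functional in the spirit of Campi--Colesanti--Gronchi, for instance $t\mapsto\vol(P_t)^{d+2}L_{P_t}^{2d}=\det\bigl(\int_{P_t}(x-\mu_{P_t})(x-\mu_{P_t})^{T}\,dx\bigr)$: a convex (resp.\ concave) function with an interior local maximum (resp.\ minimum) is constant, so $L$ would be locally constant along the family, and the resulting rigidity should give $\sigma(P)=P$.

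Once $\sigma_{\aff(F,0)}(P)=P$ for every ridge $F$, the remaining assertions follow quickly. Each such reflection lies in the symmetry group of $P$ and fixes $F$ pointwise, hence permutes the at most two facets containing $F$; in the non-degenerate case it cannot fix either $F_1$ or $F_2$ (its mirror $H$ is neither $\aff(F_i)$ nor orthogonal to it), so it interchanges them. Since the facet--ridge (dual) graph of a polytope is connected, these reflections generate a subgroup of the symmetry group acting transitively on the facets, so $P$ is isohedral, and, its facets being related by isometries, congruent. Finally, applying Theorem \ref{thm:ccg} to the hyperplane of symmetry $H$ forces the profile function of $P$ relative to $H$ to be affine, and, with this holding for every ridge $F$, a simplicial maximizer is then a simplex.
\end{proofidea}
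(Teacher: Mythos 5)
Your proposal sets up the right variational framework (your first-variation formula $\vol(P)^{-1}(I_1-(d+2)V_1)$ is correct and matches Lemma \ref{lem:derivative} in spirit), but it does not prove the theorem: the step you yourself identify as ``the main obstacle'' --- upgrading first-order conditions to the exact reflection symmetry --- is exactly where the paper's work lies, and your text only speculates about how it might be done (``I expect one needs a much richer family of perturbations\dots'', ``Should first-order information not suffice\dots''). The paper resolves it concretely: it perturbs by \emph{hinging the halfspace} of a facet about the span of one of its $(d-2)$-dimensional subfaces (this stays convex automatically, being an intersection of halfspaces, and differentiability is Lemma \ref{lem:differentiability}); the derivative is $\bigl(\e_{\distD}\norms{X}-d-2\bigr)$ times a volume derivative, where $\distD$ is the distribution on the facet with density proportional to the barycentric coordinate of the opposite vertex; an explicit moment computation via the exponential representation (Lemma \ref{lem:foc}) turns the vanishing of these $d$ derivatives per facet into the equations $\sum_{1\leq i\leq j\leq d}(1+\delta_{ik}+\delta_{jk})\,\inner{v_i}{v_j}=\tfrac{(d+1)(d+2)^2}{2}$; and then a short algebraic argument (writing $x=x_{IN}+x_{OUT}$ and noting the system's matrix $\ones\ones^T/3+I$ is invertible) shows these equations determine the apex over a fixed ridge uniquely up to reflection in $\linspan\{v_2,\dotsc,v_d\}$, so adjacent facets are mirror images and the symmetry propagates. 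None of this rigidity mechanism appears in your proposal; the ``vertex identities'' you invoke from single-vertex moves are neither derived nor shown to force the pairing of vertices under $\sigma$.

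There is also a concrete flaw in your first perturbation family: keeping $P\cap H^-$ fixed and applying $S_t=I+t\,wu^{T}$ to $P\cap H^+$ does \emph{not} give a convex body for small $t\neq 0$ in general (any facet meeting $H$ non-orthogonally produces a reflex edge along the equator; already a triangle cut by a line through its interior shows this), so it is not an admissible two-sided perturbation and the claimed eigenvector condition on $\int_{P\cap H^+}xx^{T}\,dx$ does not follow as stated. Repairing it by taking convex hulls (a genuine shadow system) destroys the change-of-variables computation you rely on, and the appeal to Campi--Colesanti--Gronchi convexity is the ingredient the paper reserves for Theorem \ref{thm:main}, after the symmetry of Theorem \ref{thm:symmetry} is already in hand; your final paragraph likewise belongs to Theorem \ref{thm:main} rather than to this statement. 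The closing deduction of isohedrality from the generated reflections and connectedness of the facet adjacency graph is fine and agrees with the paper.
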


The second and main result of this paper is Theorem \ref{thm:main}, a corollary of Theorem \ref{thm:symmetry} and the result from \cite{MR1763244} mentioned before (stated in this paper as Theorem \ref{thm:ccg}):
\begin{theorem}\label{thm:main}
Let $P$ be a $d$-dimensional simplicial polytope that is a maximizer of $P \mapsto L_P$. Then $P$ is a simplex.
\end{theorem}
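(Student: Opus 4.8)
The plan is to obtain Theorem~\ref{thm:main} as a short deduction from Theorem~\ref{thm:symmetry} and Theorem~\ref{thm:ccg}. Since $L$ is affine invariant, I would first put the maximizer $P$ in isotropic position, so that its centroid is the origin and $0 \in \interior(P)$. A global maximizer is in particular a local extremum, so Theorem~\ref{thm:symmetry} applies: for every $(d-2)$-face $G$ of $P$, the polytope $P$ is symmetric about the hyperplane $H_G := \aff(G \cup \{0\})$. This is genuinely a hyperplane through the origin, because $\aff(G) \cap P = G \subseteq \bdry(P)$ forces $0 \notin \aff(G)$.

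Next, fix any such $G$ and choose coordinates in which $H_G = \RR^{d-1}\times\{0\}$ and the reflection across $H_G$ is $(x,y)\mapsto(x,-y)$. The symmetry of $P$ then expresses it in precisely the form that Theorem~\ref{thm:ccg} concerns: $P = \{(x,y) : x \in B,\ -f(x) \le y \le f(x)\}$, where $B := \pi_{H_G}(P)$ is a $(d-1)$-dimensional polytope and $f\colon B\to[0,\infty)$ is concave. Because $P$ is a maximizer of the isotropic constant possessing this hyperplane of symmetry, Theorem~\ref{thm:ccg} forces $f$ to be affine.

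It then remains to see that an affine $f$ makes $P$ a simplex. As $f$ is affine and nonnegative, the upper cap $F^+ := \{(x,f(x)) : x \in B\}$ is the intersection of $P$ with a supporting hyperplane, hence a facet, and likewise $F^- := \{(x,-f(x)) : x \in B\}$ is a facet; moreover $P = \conv(F^+\cup F^-)$, since $[-f(x),f(x)]$ is the full vertical fiber of $P$ over each $x\in B$. Two facts complete the picture. First, $G$ is a facet of $B$: it is a face of $P$ contained in $H_G$, hence a face of $B = P\cap H_G$, of dimension $d-2 = \dim B - 1$. Second, $f$ vanishes on $G$: a point of $G$ is a boundary point of $P$ lying in $H_G = \{y=0\}$, and if $f$ were positive there then some supporting hyperplane defining $G$ would have to contain the vertical direction, which---since $G$ is a facet of $B$---would make $G$ have dimension $d-1$ rather than $d-2$. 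Hence $G\subseteq F^+\cap F^-$; and since $F^+\cap F^- = \{x\in B : f(x)=0\}$ is a proper, hence at most $(d-2)$-dimensional, face of $B$ containing $G$, we get $F^+\cap F^- = G$.

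Finally I would assemble: $P=\conv(F^+\cup F^-)$ with $F^+\cap F^- = G$, and $P$ is simplicial, so $F^+$ and $F^-$ are $(d-1)$-simplices ($d$ vertices each) and $G$ is a $(d-2)$-simplex ($d-1$ vertices). Every vertex of $P$ is a vertex of $F^+$ or of $F^-$, and a vertex common to both lies in $F^+\cap F^- = G$; hence $P$ has at most $d+d-(d-1) = d+1$ vertices and is therefore a $d$-simplex. I do not expect a serious obstacle: the real content is carried by Theorem~\ref{thm:symmetry} (the technical heart of the paper) and Theorem~\ref{thm:ccg}, and the bridge above needs only elementary convex-polytope bookkeeping---the slightly delicate points being that the two caps over $B$ are genuine facets, that $f$ vanishes on $G$, and that the caps meet exactly along $G$. (A single $(d-2)$-face $G$ already suffices; Theorem~\ref{thm:symmetry} provides far more symmetry than we use.)
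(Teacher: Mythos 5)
Your proposal is correct, and its skeleton is the same as the paper's: put the maximizer in isotropic position, apply Theorem~\ref{thm:symmetry} (a global maximizer is a local extremum) to get a hyperplane of symmetry through the origin, and then use Theorem~\ref{thm:ccg} to force the function in the symmetric representation to be affine. Where you genuinely diverge is the endgame. The paper first notes that the upper cap is a facet of the simplicial $P$, hence a simplex, so $\pi_H(P)$ is a $(d-1)$-simplex, and then runs Lemma~\ref{lem:simplex}: for $d\geq 3$ it slices $P$ by the vertical hyperplanes over the facets of $\pi_H(P)$ to conclude that the affine function is nonzero at only one vertex, while $d\leq 2$ is handled separately by citing known planar results. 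You instead keep working with the particular $(d-2)$-face $G$ that generated the symmetry hyperplane: you show $f\equiv 0$ on $G$ (any supporting hyperplane exposing $G$ would otherwise contain the whole vertical fiber through a point of $G$, which immediately contradicts $G\subseteq H_G$ --- a slightly cleaner way to state your dimension argument), deduce $F^+\cap F^-=\{x\in B: f(x)=0\}=G$ because that zero set is a proper face of $B$ containing the facet $G$ of $B$, and then count vertices of $P=\conv(F^+\cup F^-)$: the two caps are $(d-1)$-simplices with $d$ vertices each sharing at least the $d-1$ vertices of $G$, so $P$ has at most $d+1$ vertices and is a simplex. What your route buys: you never need $\pi_H(P)$ to be a simplex, you use simpliciality only through the caps, and you avoid the $d\geq 3$ restriction and hence the separate appeal to the planar maximizer result (only the trivial $d=1$ case sits outside the argument). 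The delicate points are exactly the ones you flag --- the caps are facets, $f$ vanishes on $G$, and a proper face of $B$ containing the facet $G$ equals $G$ --- and all of them check out (note also that $B=\pi_{H_G}(P)$ coincides with $P\cap H_G$ by the reflection symmetry, which is what makes $G$ a facet of $B$).
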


It is worth mentioning a parallel situation for Mahler's problem: a similar gap between smooth bodies and symmetric bodies. For simplicity, we only discuss the \emph{symmetric} Mahler problem: Given a centrally symmetric convex body $K$, define the \emph{volume product} as $f(K) = \vol(K) \vol(K^\circ)$, where $K^\circ$ is the polar body of $K$, given by
\[
K^\circ := \{ x \in \RR^d \suchthat x \cdot y \leq 1 \; \forall y \in K\}.
\]
The problem is to determine the minimizers of the volume product. The cube is conjectured to be a minimizer. This conjecture has been verified among unconditional bodies \cite{MR670798, MR876398} (which correspond to symmetries around coordinate hyperplanes) and this result has been extended to more general symmetries \cite{MR3038713}. On the other hand, \cite{MR2874925} (improving earlier work, \cite{MR2547812}), showed that if a convex body $K$ has a boundary point with positive generalized Gauss curvature, then $K$ cannot minimize the volume product.



The rest of the paper is devoted to the proofs of Theorems \ref{thm:symmetry} and \ref{thm:main}.

\section{Preliminaries}


For $n \in \NN$, let $[n]$ denote the set $\{1, \dotsc, n \}$.

A polytope in $\RR^d$ is the convex hull of a finite number of points. For a polytope $P$ with non-empty interior, the unique irredundant $H$-representation of $P$ is the unique set of closed halfspaces $\{ H_i \suchthat i=1, \dotsc, n\}$ such that $P = \cap_{i=1}^n H_i$. See \cite[Chapters 0,1,2]{MR1311028} for background.

We use a result from \cite{MR1763244} that rules out many bodies having a hyperplane of symmetry as potential maximizers of the isotropic constant. We state the result, Theorem \ref{thm:ccg}, after a few definitions.
Let $K$ be a $d$-dimensional convex body. 
Let $H \subseteq \RR^d$ be a hyperplane through the origin. 
Let $\pi_H:\RR^d \to \RR^d$ be the orthogonal projection onto $H$. Let $f_H, g_H : \pi_H(K) \to \RR$ be convex functions such that 
\begin{equation}\label{equ:representation}
K = \{ (x,y) \in \RR^{d-1} \times \RR \suchthat x \in \pi_H(K), f_H(x) \leq y \leq -g_H(x)\}.
\end{equation}

\begin{theorem}[\cite{MR1763244}]\label{thm:ccg}
If a $d$-dimensional convex body $K$ is symmetric with respect to a hyperplane through the origin $H$ and the corresponding functions $f_H$, $g_H$ are not affine, then $K$ is not a maximizer of the isotropic constant.
\end{theorem}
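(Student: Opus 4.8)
The plan is to use Blaschke's shakedown (Schüttelung). By affine invariance of $L_K$ I would first rotate so that $H = e_d^\perp$, writing points of $\RR^d$ as $(x,y)$ with $x \in \RR^{d-1}$ and $y \in \RR$, and set $C := \pi_H(K)$. The symmetry of $K$ across $H$ then gives
\[
K = \{(x,y) : x \in C,\ -h(x) \le y \le h(x)\}
\]
for a nonnegative concave function $h$ on $C$ (its upper boundary), and the hypothesis that $f_H$ and $g_H$ are not affine is exactly the statement that $h$ is not affine. The idea is that sliding each vertical chord of $K$ down until it rests on $H$ keeps the volume fixed while strictly increasing $\det A(\cdot)$, hence strictly increasing $L$.

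Concretely, for $t \in [0,1]$ I would set $T_t(x,y) := (x,\, y + t\,h(x))$ and $K_t := T_t(K) = \{(x,y) : x \in C,\ (t-1)h(x) \le y \le (t+1)h(x)\}$, so $K_0 = K$ and $K_1$ is the shaken-down body resting on $H$. Each $K_t$ is convex: its lower boundary $(t-1)h$ is convex because $t - 1 \le 0$, and its upper boundary $(t+1)h$ is concave. Since $T_t$ merely translates the vertical chord of $K$ over each $x \in C$ by $t\,h(x)$ without changing its length, Fubini gives $\vol(K_t) = \vol(K)$ for every $t$, and more precisely $T_t$ pushes the uniform distribution on $K$ forward to the uniform distribution on $K_t$; thus if $(X,Y)$ is uniform on $K$ then $(X,\, Y + t\,h(X))$ is uniform on $K_t$.

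Next I would compute the covariance matrix $A(K_t)$ in block form over $\RR^{d-1} \oplus \RR$. Writing $A_{xx} := \operatorname{Cov}(X)$ (positive definite, since $C$ is $(d-1)$-dimensional), $b := \operatorname{Cov}(X, h(X)) \in \RR^{d-1}$, $\sigma_W^2 := \operatorname{Var}(h(X))$ and $\sigma_Y^2 := \e(Y^2)$, the symmetry $y \leftrightarrow -y$ of $K$ forces $\e(Y)$, $\e[(X - \e X)Y]$ and $\e[Y\,h(X)]$ to vanish, so a direct computation yields
\[
A(K_t) = \begin{pmatrix} A_{xx} & t\,b \\ t\,b^T & \sigma_Y^2 + t^2 \sigma_W^2 \end{pmatrix}
\qquad\text{and}\qquad
\det A(K_t) = \det(A_{xx})\bigl(\sigma_Y^2 + t^2 c\bigr),
\]
where $c := \sigma_W^2 - b^T A_{xx}^{-1} b$ is the variance of the residual of $h(X)$ after linear regression on $X$. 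Since $\vol(K_t) = \vol(K)$, this gives $L_{K_t}^{2d} = \det(A_{xx})\bigl(\sigma_Y^2 + t^2 c\bigr)/\vol(K)^2$, so once $c > 0$ is established, $L_{K_t}$ is strictly increasing on $[0,1]$, whence $L_{K_1} > L_{K_0} = L_K$ and $K$ is not a maximizer.

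The main obstacle, and the heart of the argument, is the claim $c > 0$. One always has $c \ge 0$, this being the Cauchy--Schwarz inequality (equivalently, positive semidefiniteness of the joint covariance matrix of the pair $(X, h(X))$), and $c = 0$ holds if and only if $h(X)$ agrees almost surely with some affine function of $X$; because $h$ is continuous and $X$ is uniform on the full-dimensional set $C$, this would force $h$ to be affine on $C$, contrary to hypothesis. Hence $c > 0$. Everything else is routine; conceptually, $(K_t)_{t \in [0,1]}$ is the prototypical shadow system in direction $e_d$ (Blaschke's shakedown), and the quadratic behavior of $t \mapsto \det A(K_t)$ computed above makes explicit the phenomenon that underlies the shadow-system argument in \cite{MR1763244}.
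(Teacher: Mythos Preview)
Your argument is correct and self-contained. The paper's own proof goes a different route: it does not compute anything directly but simply invokes Theorems~3.2 and~3.6 of \cite{MR1763244}, which are phrased in terms of Sylvester's functional $M_2(K;d+1)$ (the normalized second moment of the volume of a random simplex in $K$), and then uses the identity
\[
L_K^{2d} = \frac{d!}{d+1}\,M_2(K;d+1)
\]
to translate the conclusion from $M_2$ to $L_K$. So both arguments ultimately rest on Blaschke's shakedown/shadow-system deformation, but the paper outsources the monotonicity to \cite{MR1763244} via the Sylvester functional, whereas you compute $\det A(K_t)$ directly with the Schur complement and identify the key constant $c=\sigma_W^2-b^{\mathsf T}A_{xx}^{-1}b$ as a regression residual variance. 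Your route is more elementary and makes the quadratic behaviour in $t$ (hence the strict increase) completely explicit without passing through $M_2$; the paper's route is shorter on the page and situates the result within the shadow-system framework of \cite{MR1763244}.
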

(Note that in this case we have $f_H = g_H$.)
\begin{proof}
Immediate from \cite[Theorem 3.2]{MR1763244},  \cite[Theorem 3.6]{MR1763244} and the fact that 
\[
L_K^{2d} = \frac{ \det A(K)}{ \vol(K)^2 } = \frac{d!}{d+1} M_2(K;d+1)
\]
where $M_2(K;d+1)$ is the second moment of the volume of a random simplex in $K$, normalized so that it is affinely invariant. That is,
\[
M_2(K;d+1) = \frac{1}{\vol(K)^{d+3}} \int_{x_1 \in K} \dotsb \int_{x_{d+1}\in K} \bigr(\vol \conv (x_1, \dotsc, x_{d+1})\bigr)^2 \, \ud x_{d+1}\dotsm \ud x_{1}.
\]
\end{proof}
\section{Derivative with respect to hinging}

In this section we compute the derivative of the isotropic constant of a polytope with respect to hinging of one of its facets. The first step (the next lemma) is to show conditions under which integrals of functions over polytopes are differentiable as a facet hinges. This is slightly non-trivial; similar differentiability issues have been discussed before \cite[Lemma 2.4]{MR2132704}.
\begin{lemma}\label{lem:differentiability}
Let $P \subseteq \RR^d$ be a closed polyhedron with non-empty interior such that the origin is on the boundary of $P$. Let $Z$ be an $(d-2)$-dimensional subspace that does not intersect the interior of $P$. Let $\theta:\RR^d \to [-\pi, \pi)$ be an angle in a system of polar coordinates (see Figure \ref{fig:differentiability}) on $Z^\perp$ so that 
\begin{enumerate}
\item There are $a,b \in (-\pi, \pi)$ satisfying $\theta(P) = [a,b]$ and $a<0<b$.
\item $P \cap \{x \suchthat a \leq \theta(x) \leq 0\}$ is bounded.
\end{enumerate}
Let $f: P \to \RR$ be a continuous function. Let $K_t = \{x \in P \suchthat \theta(x) \in [-\pi, t]\}$. Then $g(t) = \int_{K_t} f(x) \ud x $ is differentiable at $0$.
\end{lemma}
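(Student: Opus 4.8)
The plan is to prove Lemma \ref{lem:differentiability} by reducing the $d$-dimensional integral to a one-parameter integral over the angular variable $\theta$ and then applying a one-dimensional differentiability argument. The essential geometric picture: $Z$ is a $(d-2)$-dimensional subspace (the ``hinge''), and $Z^\perp$ is a $2$-plane on which we have polar coordinates $(r,\theta)$ centered appropriately so that $\theta$ is well-defined on $P$; the hyperplane $\{\theta = t\}$ rotates about $Z$ as $t$ varies. We want to show $g(t) = \int_{K_t} f$ is differentiable at $t = 0$, where $K_t$ is the part of $P$ on the side $\theta \le t$.

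**Slicing and the coarea-type formula.**

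First I would introduce coordinates $(z, r, \theta)$ with $z \in Z \cong \RR^{d-2}$ and $(r,\theta)$ polar on $Z^\perp$; Lebesgue measure becomes $r \, \ud r \, \ud\theta \, \ud z$. For each angle $\theta = s \in [a,b]$, let $S(s) = P \cap \{\theta = s\}$ be the corresponding slice (a $(d-1)$-dimensional set), and define
\[
h(s) = \int_{S(s)} f(z,r,s) \, r \, \ud r \, \ud z,
\]
the weighted mass of that slice. Then Fubini gives $g(t) = \int_{-\pi}^{t} h(s) \, \ud s$ (with the convention $h \equiv 0$ outside $[a,b]$), so that by the fundamental theorem of calculus, $g$ is differentiable at $0$ with $g'(0) = h(0)$ as soon as $h$ is continuous at $s = 0$. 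So the whole lemma reduces to: \emph{$h$ is continuous at $0$}. The hypotheses are exactly what is needed here — hypothesis (1) ensures $0$ is in the interior of the angular range $[a,b]$, so slices near $\theta = 0$ are ``genuine'' cross-sections of the polyhedron rather than boundary artifacts, and hypothesis (2) gives boundedness of the relevant portion of $P$ near $\theta = 0$, so the slices $S(s)$ for $s$ near $0$ are uniformly bounded and the integrals defining $h(s)$ are finite and do not escape to infinity.

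**Continuity of the slice integral.**

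To show $h$ is continuous at $0$: the polyhedron $P$ is defined by finitely many linear inequalities, so the slice $S(s)$ is a polyhedron in the $(d-1)$-dimensional hyperplane $\{\theta = s\}$ depending on $s$ in a controlled, piecewise-linear way; for $s$ in a small neighborhood of $0$ the combinatorial type of $S(s)$ is locally constant except possibly at $0$ itself, and in any case the indicator functions $\mathbf{1}_{S(s)}$ converge pointwise a.e. to $\mathbf{1}_{S(0)}$ as $s \to 0$. Combined with a uniform bound on the region (from hypothesis (2), which controls the side $a \le \theta \le 0$, while the side $0 \le \theta \le b$ near $0$ is controlled by continuity of the linear constraints and the fact that slices vary continuously) and continuity and local boundedness of $f$, the dominated convergence theorem yields $h(s) \to h(0)$. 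The weight $r$ is bounded on the bounded region, so it causes no trouble.

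**Main obstacle.**

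The step I expect to require the most care is establishing the uniform integrable domination needed for dominated convergence near $s = 0$ — i.e., ruling out that mass of the slice $S(s)$ escapes to infinity as $s \to 0^+$. On the side $a \le \theta \le 0$ this is handed to us by hypothesis (2), but on the side $0 < \theta$ one must argue that, because $0$ lies strictly inside $[a,b]$ and $P$ is a polyhedron, the slices $S(s)$ for small $s > 0$ stay within a fixed bounded set; this uses that $P$ is locally (near the relevant faces) pinned down by its defining halfspaces and that the unbounded directions of $P$, if any, correspond to angular values strictly beyond a neighborhood of $0$. Once the bounded ``sandwich'' region around the slice $S(0)$ is identified, everything else is a routine application of Fubini and dominated convergence.
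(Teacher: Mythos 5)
Your proposal has the same skeleton as the paper's proof: write $g(t)=\int_{-\pi}^{t}h(s)\,\ud s$ for the angular slice mass $h$ and reduce differentiability of $g$ at $0$ to continuity of $h$ at $0$. The execution of the continuity step differs: the paper works in hyperspherical coordinates and rescales the radial variable by the radial function of $P$, so that $h$ becomes an integral of a continuous integrand over a fixed compact domain, and the whole burden falls on showing the radial function is bounded and continuous for directions near $\{\theta=0\}$; you stay in cylindrical coordinates and invoke a.e.\ convergence of the slice indicators plus dominated convergence. Your route is viable and somewhat more elementary, but the step you flag as the ``main obstacle'' is exactly the mathematical content of the lemma, and you only assert it. The clean justification (and essentially the paper's) is via the recession cone: since $P$ is a closed polyhedron, $\rec(P)$ is a closed cone; hypothesis (2) together with hypothesis (1) implies that no recession direction has angle in $[a,0]$ and that none lies in $Z$ (either would force $P\cap\{x \suchthat a\le\theta(x)\le 0\}$ to be unbounded, since $P$ contains points of negative angle and translating them along such a recession direction keeps the angle in $[a,0]$ eventually). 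Then $\rec(P)\cap S^{d-1}$ and the compact set of unit directions with angle in $[a,0]$ or lying in $Z$ are disjoint compact sets, so there is an angular gap $\delta>0$; hence for $0<s^*<\delta$ the closed convex wedge piece $P\cap\{x\suchthat 0\le\theta(x)\le s^*\}$ has trivial recession cone and is bounded, and it contains all slices $S(s)$, $0\le s\le s^*$. Citing only hypothesis (1), i.e.\ that $0$ lies strictly inside $[a,b]$, does not give this.

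A second, smaller point you should make explicit: the a.e.\ convergence of the indicators requires that $\partial P\cap\{\theta=0\}$ be a null set inside the slice half-hyperplane. This is where hypothesis (1) is really used: if a facet of $P$ were contained in the hyperplane spanned by $Z$ and the angle-$0$ direction, then $P$ would lie in one of the two closed halfspaces it bounds, forcing $\theta(P)\subseteq[-\pi,0]$ or $\theta(P)\subseteq\{-\pi\}\cup[0,\pi)$, contradicting $a<0<b$; so $\partial P$ meets that hyperplane in a finite union of sets of dimension at most $d-2$. (Your remark about the combinatorial type of $S(s)$ being locally constant is not needed and is harder to make precise.) With these two points supplied, your argument is complete and matches the paper's in substance.
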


\begin{figure}
\begin{center}
\def\svgwidth{.5\columnwidth}
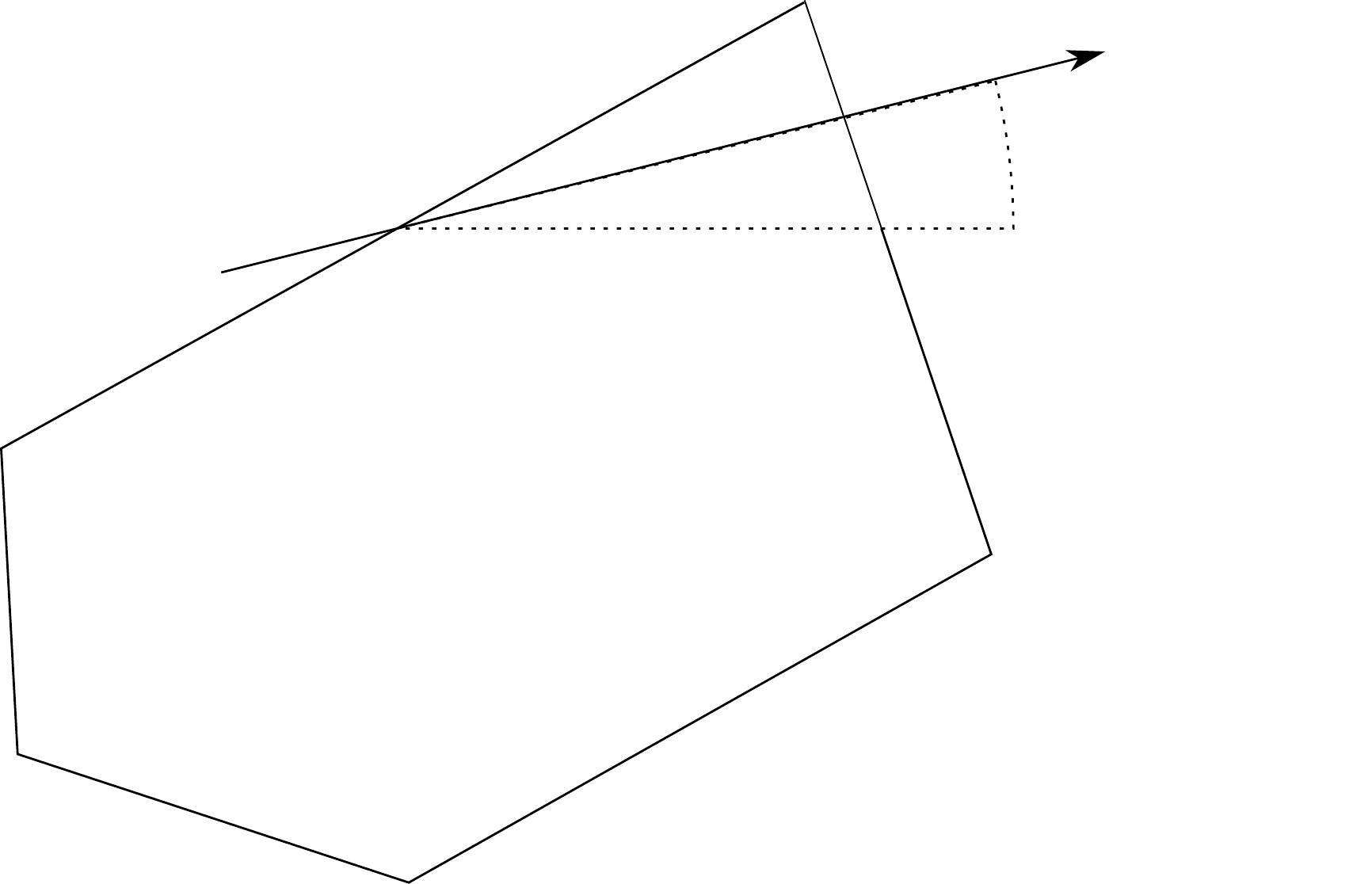
\caption{Coordinates for the proof of Lemma \ref{lem:differentiability}.}\label{fig:differentiability}
\end{center}
\end{figure}

\begin{proof}
Let $r:S^{d-1} \to [0, \infty]$ be the radial function of $P$. That is, $r(u) = \max \{ a \in [0, \infty] \suchthat a u \in P\}$.

Write $g(t)$ in hyperspherical coordinates:
\begin{equation}\label{equ:hyperspherical}
g(t) = 
\int_{\theta=-\pi}^t 
\int_{\phi_{d-2}=0}^\pi \dotsm \int_{\phi_{1}=0}^\pi \int_{\rho=0}^{r(v(\phi, \theta))} 
f\bigl(\rho v(\phi, \theta)\bigr)
J(\rho, \phi, \theta)
\ud \rho \ud \phi_1 \ud \phi_2 \dotsm \ud \phi_{d-2} 
\ud\theta,
\end{equation}
where $\phi = (\phi_1, \dotsc, \phi_{d-1})$, $J(\rho, \phi, \theta)$ is the volume element, given by
\[
J(\rho, \phi, \theta) = \rho^{d-1} \sin^{d-2}(\phi_1) \sin^{d-3}(\phi_2) \dotsm \sin(\phi_{d-2}),
\]
and $v(\phi, \theta)$ is the change of variable that gives a point on $S^{d-1}$ given the angles, that is,
$v(\phi, \theta)_1 = \cos(\phi_i)$, $v(\phi, \theta)_2 = \sin(\phi_1) \cos(\phi_2)$, \ldots, $v(\phi, \theta)_{d-1} = \sin(\phi_1) \dotsm \sin(\phi_{d-2}) \cos(\theta)$, $v(\phi, \theta)_{d} = \sin(\phi_1) \dotsm \sin(\phi_{d-2}) \sin(\theta)$.

Let $h(\theta)$ be the integrand of the outermost integral in \eqref{equ:hyperspherical}. To establish that $g$ is differentiable at 0, it is enough to show that $h(\theta)$ is continuous in a neighborhood of 0.

To simplify the argument we can go to a fixed domain of integration for $h$ by the change of variable $\lambda = \rho/r\bigl(v(\phi, \theta)\bigr)$ so that
\begin{align*}
h(\theta) = 
\int\limits_{\phi_{d-2}=0}^\pi \dotsm \int\limits_{\phi_{1}=0}^\pi \int\limits_{\lambda=0}^{1} 
&f\bigl(\lambda r(v(\phi, \theta)) v(\phi, \theta) \bigr) \cdot \\
&J\bigl(\lambda r(v(\phi, \theta)), \phi, \theta\bigr) \,
r\bigl(v(\phi, \theta)\bigr) \,
\ud \lambda \ud \phi_1 \ud \phi_2 \dotsm \ud \phi_{d-2} .
\end{align*}

We first prove that $h(\theta)$ is finite in a neighborhood of 0. To see this, it is enough to show that $r\bigl(v(\phi, \theta)\bigr)$ is bounded for $\theta$ in a neighborhood of 0 and any $\phi \in [0,\pi]^{d-2}$.

The behavior of $r(v)$ on $S^{d-1}$ is as follows:  $r(v)=\infty$ iff $v$ is in the recession cone of $P$ (denoted $\rec(P)$ and given by $\rec(P) = \{ x \in \RR^d \suchthat  x + P \subseteq P\}$, see \cite[Section 1.4]{MR1216521} or \cite[Section 8]{Rockafellar70}). \details{$0 \in P$ implies $\rec(P) \subseteq P$ and this gives ($\Leftarrow$). 
For the other direction, if $r(v)=\infty$, then $Mv \in P$ for all $M \geq 0$. 
Take an arbitrary point $y \in P$. 
It is enough to show $v + y \in P$. To see, use the convexity of $P$ to get $\lambda M v + (1-\lambda) y \in P$ for all $M\geq0$ and $\lambda =1/M$. That is, $v + (1-\frac{1}{M}) y \in P$. Make $M \goesto \infty$ and use the fact that $P$ is closed to conclude that $v + y \in P$.} Assumption 2 implies that $r(v)$ is finite for $\theta=0$ and any $\phi$. As $P$ is a closed convex body, its recession cone is closed. On $S^{d-1}$, $\{ \theta=0 \} \cap S^{d-1}$ is compact and $\rec(P) \cap S^{d-1}$ is also compact, and they are disjoint. Therefore, there is an open neighborhood $A \subseteq S^{d-1}$ containing $\{ \theta=0 \} \cap S^{d-1}$ such that $A \cap \rec(P) =\emptyset$. Moreover, $r$ is continuous in $A$ 
 (see footnote\footnote{Let $D \subseteq S^{d-1}$ be the set where $r(v) \in (0,\infty)$. Then $r(v)$ is continuous in $\interior D$. To see this, note that $r(v) = 1/f(v)$, where $f(v) = \inf \{\lambda>0 \suchthat v \in \lambda P\}$ is the (slightly generalized) Minkowski functional of $P$. We have $f:\RR^d \to [0,\infty]$ and $f$ is convex. Therefore $f$ is continuous in $\interior \dom f$ \cite[Theorem 2.35]{MR1491362}, where $\dom f$ is the effective domain of $f$, that is, $\dom f = \{x \in \RR^d: f(x) < \infty. \}$. Clearly $\interior D \subseteq \interior \dom f$.}). 
There is also a smaller compact neighborhood $C$ satisfying $\{ \theta=0 \} \cap S^{d-1} \subseteq C \subseteq A$, so that $r$ is not only continuous but also bounded in $C$. Therefore, for $\theta$ in a neighborhood of $0$ we have that $h(\theta)$ is the integral of a bounded function on a compact domain of integration. This implies that $h(\theta)$ is finite in a neighborhood of 0. 

We now establish that $h(\theta)$ is continuous in the same neighborhood of 0. This follows from Lebesgue's dominated convergence theorem after observing that $h$ is the integral of a continuous integrand over a compact domain, and the integrand is (uniformly) bounded in that domain for $\theta$ in a neighborhood of 0.
\end{proof}

\begin{figure}
\begin{center}
\def\svgwidth{.5\columnwidth}
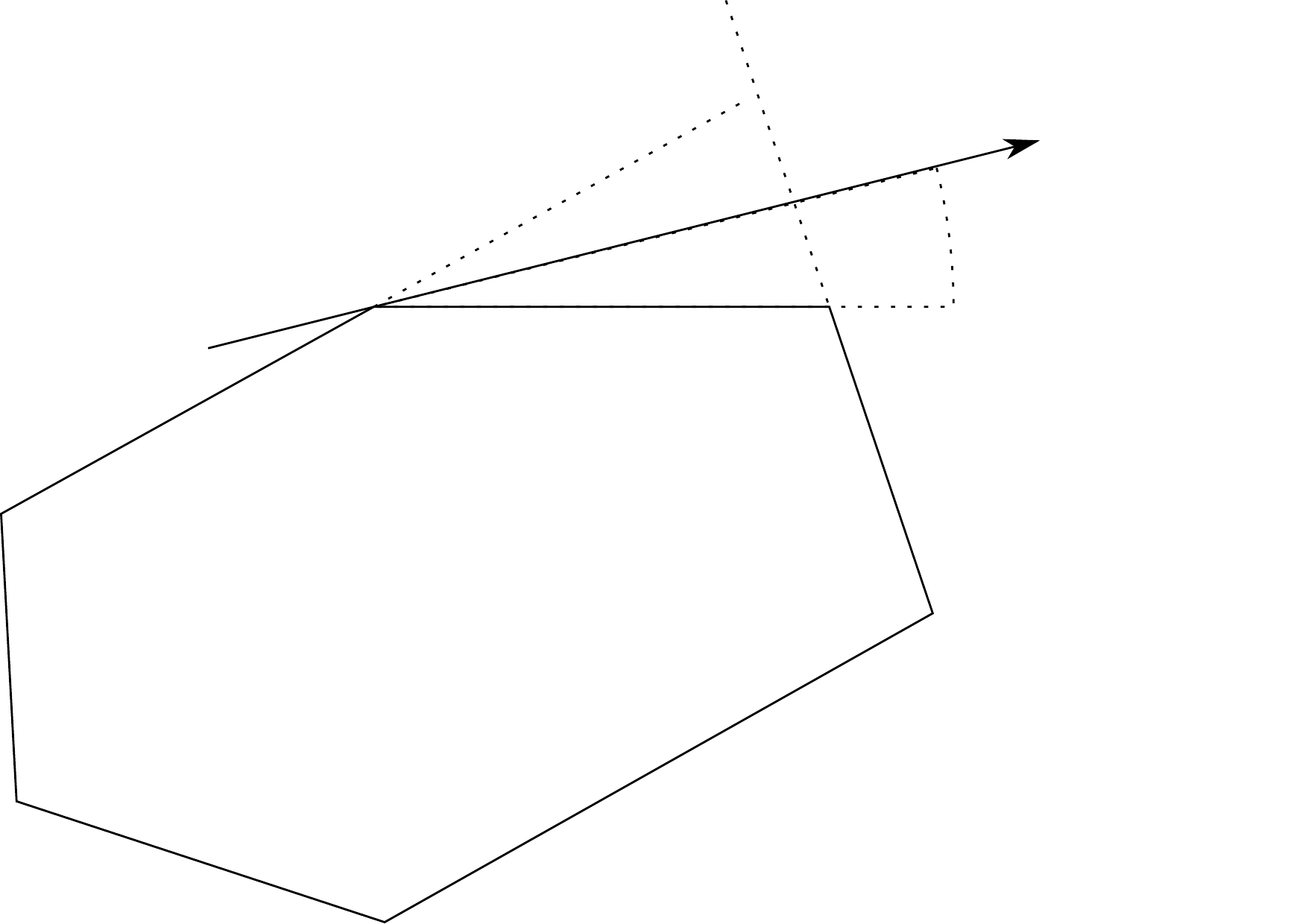
\caption{Cylindrical coordinates for the proof of Lemma \ref{lem:derivative}.}\label{fig:cylindrical}
\end{center}
\end{figure}

\begin{lemma}[derivative with respect to hinging]\label{lem:derivative}
Let $K \subseteq \RR^d$ be an isotropic simplicial polytope with unique irredundant $H$-representation $\{ H_i \suchthat i=1, \dotsc, m\}$ so that $K = \cap_{i=1}^m H_i$. Fix a facet $i'$ and let $P$ be the polyhedron obtained by relaxing $H_{i'}$, that is:
\[
P = \bigcap_{i \neq i'} H_i.
\]
Let $v_1, \dotsc, v_d$ be the vertices of facet $i'$. 
Let $v_{j'}$ be one of those vertices. 
Consider a system of cylindrical coordinates as follows (see Figure \ref{fig:cylindrical}): 
A point $x \in \RR^d$ is parameterized by $(\rho, \theta, z)$. 
Let $Z = \aff \{v_j \suchthat j \neq j'\}$.
The origin of the new system of coordinates is an arbitrary point in $Z \cap K$.
Parameters $\rho \in [0, \infty)$, $\theta \in [-\pi, \pi)$ are polar coordinates in the 2-dimensional plane orthogonal to $Z$.
Parameter $z$ is a vector in $Z$. 
Polar coordinates $\rho, \theta$ are oriented so that points in facet $i'$ have $\theta = 0$ and all of $K$ has $\theta \in (-\pi, 0]$. 
Let 
\begin{align*}
K_t &= \{x \in P \suchthat \theta(x) \in [-\pi, t]\},\\
S_t &= P \cap \{ x \suchthat \theta(x) = t\}.
\end{align*}
Let $\distD$ be the distribution of a random vector $X$ supported on $S_0$ with density proportional to $\rho(X)$.
Then
\[
\left.\frac{d}{dt} L_{K_t}^{2d} \right|_{t=0} = \bigl(\e_{X \leftarrow \distD}(\norms{X}) - d - 2 \bigr)\frac{\evaluatedat{\frac{d}{dt} \vol K_t}{t=0}}{(\vol K)^3}.
\]
\end{lemma}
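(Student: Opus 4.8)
\begin{proofidea}
The plan is to differentiate $L_{K_t}^{2d}=\det A(K_t)/\vol(K_t)^2$ by reducing it to the derivatives of two scalar integrals over $K_t$, to evaluate those derivatives with the cylindrical coordinates of the statement together with Lemma \ref{lem:differentiability}, and to recognise the resulting slice integrals as $\distD$-expectations. \textbf{Step 1 (reduction).} Write $V(t)=\vol K_t$, $\mu(t)=\mu_{K_t}$, $N(t)=\int_{K_t}xx^T\,\ud x$, so that $A(K_t)=N(t)/V(t)-\mu(t)\mu(t)^T$. Granting for now (Step 2) that $V$, $\mu$ and the entries of $N$ are differentiable at $0$, one applies the quotient rule and Jacobi's formula $\tfrac{\ud}{\ud t}\det A=\det A\cdot\operatorname{tr}(A^{-1}\dot A)$. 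At $t=0$ almost everything collapses because $K=K_0$ is isotropic: $\mu(0)=0$ (so $\mu\mu^T$ and its derivative drop out, and $A'(0)=N'(0)/\vol K-(\vol K)^{-1}V'(0)I$), $A(K_0)=I$, and $N(0)=(\vol K)\,I$. This yields
\[
\evaluatedat{\frac{\ud}{\ud t}L_{K_t}^{2d}}{t=0}=\frac{\operatorname{tr}(N'(0))-(d+2)\,V'(0)}{(\vol K)^3},\qquad \operatorname{tr}(N'(0))=\evaluatedat{\frac{\ud}{\ud t}\int_{K_t}\norms{x}\,\ud x}{t=0},
\]
so it remains to compute $V'(0)$ and $\evaluatedat{\tfrac{\ud}{\ud t}\int_{K_t}\norms{x}\,\ud x}{t=0}$.

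\textbf{Step 2 (differentiating integrals over $K_t$).} For a continuous $f\colon P\to\RR$, write $\int_{K_t}f\,\ud x$ in the cylindrical coordinates $(\rho,\theta,z)$, whose volume element is $\rho\,\ud\rho\,\ud\theta\,\ud z$, and use Fubini in $\theta$:
\[
\int_{K_t}f\,\ud x=\int_{-\pi}^{t}h_f(\theta)\,\ud\theta,\qquad h_f(\theta)=\int_{S_\theta}f(x)\,\rho(x)\,\ud\sigma_\theta(x),
\]
where $\ud\sigma_\theta$ is the measure on the slice $S_\theta$ that equals $\ud\rho\,\ud z$ in coordinates. Lemma \ref{lem:differentiability} applies to $P$ (a closed, possibly unbounded polyhedron whose boundary contains the coordinate origin), to $Z$ (which is $(d-2)$-dimensional and, being the affine hull of a ridge of $K$ that lies in some facet hyperplane $\partial H_k$ with $k\neq i'$, does not meet $\interior P$), to $f$, and to $\theta$; it gives that $t\mapsto\int_{K_t}f$ is differentiable at $0$. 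Since its proof shows $h_f$ is continuous near $0$, the fundamental theorem of calculus gives $\evaluatedat{\tfrac{\ud}{\ud t}\int_{K_t}f}{t=0}=h_f(0)=\int_{S_0}f\,\rho\,\ud\sigma_0$. Taking $f=1$, each coordinate function and each $x_ix_j$, and $f=\norms{x}$ supplies the differentiability used in Step 1 and gives $V'(0)=\int_{S_0}\rho\,\ud\sigma_0=:C$ and $\evaluatedat{\tfrac{\ud}{\ud t}\int_{K_t}\norms{x}\,\ud x}{t=0}=\int_{S_0}\norms{x}\,\rho\,\ud\sigma_0$. Here $S_0=P\cap\{\theta=0\}=K\cap\{\theta=0\}$ is facet $i'$, so $\rho>0$ on $S_0$ off the lower-dimensional set $Z$ and $C>0$; thus $C$ is exactly the normalizing constant of $\distD$ (whose density on $S_0$ is $\rho/C$), whence $\int_{S_0}\norms{x}\rho\,\ud\sigma_0=C\cdot\e_{X\leftarrow\distD}(\norms{X})$. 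In particular $\operatorname{tr}(N'(0))=V'(0)\,\e_{X\leftarrow\distD}(\norms{X})$ with $V'(0)=C=\evaluatedat{\tfrac{\ud}{\ud t}\vol K_t}{t=0}$.

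\textbf{Step 3 (assembly).} Substituting $\operatorname{tr}(N'(0))=V'(0)\,\e_{X\leftarrow\distD}(\norms{X})$ into the formula of Step 1 gives
\[
\evaluatedat{\frac{\ud}{\ud t}L_{K_t}^{2d}}{t=0}=\bigl(\e_{X\leftarrow\distD}(\norms{X})-d-2\bigr)\frac{\evaluatedat{\frac{\ud}{\ud t}\vol K_t}{t=0}}{(\vol K)^3},
\]
as claimed. I expect the main obstacle to be Step 2: carefully verifying the geometric hypotheses of Lemma \ref{lem:differentiability} for the relaxed polyhedron $P$ (that the cylindrical origin lies on $\partial P$, that $Z$ avoids $\interior P$, and that the angular range $\theta(P)$ sits strictly inside $(-\pi,\pi)$ with $K$ occupying its nonpositive part, with the value of $h_f$ at $0$ being the stated slice integral), and noticing that the Jacobian factor $\rho$ produced by the cylindrical change of variables is precisely what makes the normalizing constant of $\distD$ coincide with $\evaluatedat{\tfrac{\ud}{\ud t}\vol K_t}{t=0}$, which is what yields the clean final identity.
\end{proofidea}
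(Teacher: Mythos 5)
Your proposal is correct and follows essentially the same route as the paper's proof: differentiability via Lemma \ref{lem:differentiability}, Jacobi's formula combined with isotropy ($\mu_K=0$, $A(K)=I$) to reduce everything to the derivative of the second moment, cylindrical coordinates to express that derivative as a $\rho$-weighted integral over $S_0$, and the identification of that integral with $\e_{X \leftarrow \distD}(\norms{X})\cdot\evaluatedat{\tfrac{d}{dt}\vol K_t}{t=0}$. The only (harmless) difference is bookkeeping: you differentiate the unnormalized moments $V(t)$ and $N(t)=\int_{K_t}xx^T\,\ud x$ and obtain the $-(d+2)$ in one step, whereas the paper works with the normalized quantities $\e_{X\in K_t}(XX^T)$ and $\vol(K_t)^2$, producing the $-d$ and $-2$ separately.
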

\begin{proof}
We have $L_{K_t}^{2d} = \det A(K_t) /\vol(K_t)^2$. Given this formula, the differentiability of $t \mapsto L_{K_t}^{2d}$ and of the varied integrals below at $t=0$ follows from Lemma \ref{lem:differentiability}.

We first compute $\left.\frac{d}{dt} \det A(K_t)\right|_{t=0}$. Part of the argument is the same as the proof of \cite[Proposition 15]{MR2891161}: An identical argument shows that $\left.\frac{d}{dt} \det A(K_t)\right|_{t=0} = \left.\frac{d}{dt} \e_{X \in K_t} (\norms{X})\right|_{t=0}$, then we compute this last expression for the current lemma. To make the argument more readable and self-contained, we repeat the relevant parts of the argument from \cite{MR2891161} here.

We have
\begin{align*}
A(K_t) &= \e_{X\in K_t} \bigl((X- \mu_{K_t})(X- \mu_{K_t})^T\bigr) \\
&= \e_{X\in K_t} (X X^T) - \mu_{K_t} \mu_{K_t}^T.
\end{align*}
By isotropy, $\mu_K=0$ and this implies
\begin{equation}\label{equ:dAk}
\begin{aligned}
\evaluatedat{\frac{d}{dt} A(K_t)}{t=0}
    &= \evaluatedat{\frac{d}{dt} \e_{X\in K_t} (X X^T)}{t=0}.
\end{aligned}
\end{equation}
Use the identity
\[
\frac{d}{dM} \det M = \left(M^{-1} \right)^T \det M
\]
to conclude
\begin{align*}
\evaluatedat{\frac{d}{dt} \det A(K_t)}{t=0}
    &= \evaluatedat{\evaluatedat{\frac{d}{dM} \det M}{M=A(K_t)} \cdot \frac{d}{dt} A(K_t)}{t=0} \\
    &= \evaluatedat{\det \bigl(A(K_t)\bigr) \left(A(K_t)^{-1}\right)^T \cdot \frac{d}{dt} A(K_t)}{t=0}
\end{align*}
where the dot ``$\cdot$'' represents the Frobenius inner product of matrices, $M \cdot N = \sum_{ij} M_{ij} N_{ij}$. This, isotropy and \eqref{equ:dAk} give
\begin{align*}
\evaluatedat{\frac{d}{dt} \det A(K_t)}{t=0}
    &= I \cdot \evaluatedat{\frac{d}{dt} \e_{X \in K_t} (X X^T)}{t=0} \\
    &= \evaluatedat{\frac{d}{dt} \e_{X \in K_t} (\norms{X})}{t=0}.
\end{align*}

 
Let $S_{r,t} = P \cap \{ x \suchthat \rho(x) = r, \theta(x) = t\}$. 
We have:
\begin{align*}
\e_{X \in K_t} (\norms{X})
&= \frac{1}{\vol K_t} \int_{-\pi}^t \int_0^\infty \e_{X \in S_{\rho, \theta}} (\norms{X}) \vol_{d-2} (S_{\rho, \theta}) \rho \,\ud\rho \,\ud\theta.
\end{align*}
Differentiating at $t=0$:
\begin{equation}\label{equ:diffnorm}
\begin{aligned}
&\evaluatedat{\frac{d}{dt} \e_{X \in K_t} (\norms{X})}{t=0}
= \\ 
&\evaluatedat{- \frac{\frac{d}{dt} \vol K_t}{\vol K_t} \e_{X \in K_t} (\norms{X}) + 
\frac{1}{\vol K_t} \int_0^\infty \e_{X \in S_{\rho, t}} (\norms{X}) \vol_{d-2} (S_{\rho, t}) \rho \,\ud\rho}{t=0}. 
\end{aligned}
\end{equation}
We have
\begin{equation}\label{equ:volume}
\begin{aligned}
\evaluatedat{\frac{d}{d t} \vol(K_t)}{t=0} 
&= \evaluatedat{\frac{d}{d t} \int_{-\pi}^t \int_0^\infty \vol_{d-2}(S_{\rho, \theta}) \rho \ud \rho \ud \theta}{t=0} \\
&= \int_0^\infty \vol_{d-2}(S_{\rho,0}) \rho \ud \rho,
\end{aligned}
\end{equation}
\begin{equation}\label{equ:norm}
\begin{aligned}
\e_{X \leftarrow \mathcal{D}}(\norms{X}) 
&= \frac{\int_{S_0} \norms{x} \rho(x) \ud x}{\int_{S_0} \rho(x) \ud x} \\
&= \frac{\int_{\rho=0}^\infty \int_{S_{\rho, 0}} \norms{x} \rho \ud x \ud \rho}{\int_{\rho=0}^\infty \vol_{d-2} (S_{\rho,0}) \rho \ud \rho}
\end{aligned}
\end{equation}
and
\begin{equation}\label{equ:norm2}
\begin{aligned}
\e_{X \in S_{\rho, t}}(\norms{X}) = \frac{\int_{S_{\rho, t}} \norms{x} \ud x}{\vol_{d-2}(S_{\rho, t})}.
\end{aligned}
\end{equation}
That is, using \eqref{equ:volume}, \eqref{equ:norm}, \eqref{equ:norm2} we get
\begin{align*}
\int_0^\infty \e_{X \in S_{\rho, 0}}(\norms{X}) \vol_{d-2} (S_{\rho, 0}) \rho \, \ud \rho
&= \int_{\rho=0}^\infty \int_{S_{\rho, 0}} \norms{x} \ud x \rho \, \ud \rho \\
&= \e_{X \leftarrow \mathcal{D}} (\norms{X}) \int_{\rho=0}^\infty \vol_{d-2} (S_{\rho, 0}) \rho \, \ud \rho \\
&= \e_{X \leftarrow \mathcal{D}} (\norms{X}) \evaluatedat{\frac{d}{dt} \vol(K_t) }{t=0}.
\end{align*}
This in \eqref{equ:diffnorm} and isotropy give
\begin{align*}
\evaluatedat{\frac{d}{dt} \e_{X \in K_t} (\norms{X})}{t=0}
&= \frac{\evaluatedat{\frac{d}{dt} \vol K_t}{t=0}}{\vol K} \left[ - d + \e_{X \leftarrow \distD}(\norms{X}) \right].
\end{align*}
This implies,
\[
\left.\frac{d}{dt} \det A(K_t)\right|_{t=0} 
= \bigl(\e_{X \leftarrow \distD}(\norms{X}) - d \bigr)\frac{\evaluatedat{\frac{d}{dt} \vol K_t}{t=0}}{\vol K}.
\]
Thus,
\begin{align*}
\left.\frac{d}{dt} L_{K_t}^{2d} \right|_{t=0} &= \left. \frac{d}{dt} \frac{\det A(K_t)}{(\vol K_t)^2} \right|_{t=0} \\
&= \left. \frac{-2 \det A(K_t)}{(\vol K_t)^3} \frac{d}{dt} \vol K_t + \frac{1}{(\vol K_t)^2} \frac{d}{dt} \det A(K_t) \right|_{t=0} \\
&= \frac{-2}{(\vol K)^3} \evaluatedat{\frac{d}{dt} \vol K_t}{t=0} + \frac{1}{(\vol K)^2} \bigl(\e_{X \leftarrow \distD}(\norms{X}) - d \bigr)\frac{\evaluatedat{\frac{d}{dt} \vol K_t}{t=0}}{\vol K} \\
&= \bigl(\e_{X \leftarrow \distD}(\norms{X}) - d - 2\bigr)\frac{\evaluatedat{\frac{d}{dt} \vol K_t}{t=0}}{(\vol K)^3}.
\end{align*}

\end{proof}

%

\section{Proof of Theorem \ref{thm:symmetry}}

\emph{Proof idea:} For a given facet $F$ of $P$, we compute the derivative of $L_P^{2d}$ as $F$ ``hinges'' around one of its $(d-2)$-dimensional facets. 
To preserve convexity, this is done by hinging the halfspace inducing that facet.
Setting those derivatives to zero gives $d$ non-linear equations on the $d$ vertices of $F$. 
We will show that, if we fix $d-1$ vertices $v_1, \dotsc, v_{d-1}$, the system of equations determines the last vertex uniquely up to reflection around the span of $v_1, \dotsc, v_{d-1}$. 
This implies that the two facets of $P$ sharing $v_1, \dotsc, v_{d-1}$ are reflections of each other around that span. Applying the same argument to all pairs of adjacent facets, we conclude that all of $P$ is invariant under reflection around that span.
\begin{proof}
Let $P$ be a polytope as in the theorem. Let $F$ and $F'$ be two adjacent facets of $P$ with common vertices $\{v_2, \dotsc, v_d\}$ and additional vertices $v_1$ and $v'_1$, respectively. The first order necessary condition from Lemma \ref{lem:foc} implies conditions on $v_1$ and $v_1'$ given $v_2, \dotsc, v_d$. Let $v_1$ be denoted $x$ in those equations, with $v_2, \dotsc, v_d$ as parameters. We get two types of equations: For $k=1$ we get
\begin{align*}
\frac{(d+1)(d+2)^2}{2} = 3 \norms{x} + 2 \sum_{i=2}^d \inner{x}{v_j} + \sum_{2\leq i\leq j\leq d} \inner{v_i}{v_j}.
\end{align*}
For $k = 2, \dotsc, d$ we get
\begin{align*}
\frac{(d+1)(d+2)^2}{2} = \norms{x} + \sum_{i=2}^d \inner{x}{v_j} + \sum_{2\leq i\leq j\leq d} \inner{v_i}{v_j} + \inner{x}{v_k} + \sum_{k \leq j \leq d} \inner{v_k}{v_j} + \norms{v_k}.
\end{align*}
Decompose $x$ into its components in the span of $\{v_2, \dotsc, v_d\}$ and orthogonal to that span. That is, let $x=x_{IN} + x_{OUT}$, with $x_{IN} \in \linspan \{v_2, \dotsc, v_d\}$ and $x_{OUT} \in \linspan \{v_2, \dotsc, v_d\}^\perp$. 
To get that $F$ and $F'$ are reflections of each other around $\linspan \{v_2, \dotsc, v_d\}$, it is enough to show that the equations on $x$ determine $x_{IN}$ uniquely and $\norm{x_{OUT}}$ uniquely. 
In the new variables, the equations on $x$ become:
\begin{equation}\label{equ:one}
\frac{(d+1)(d+2)^2}{2} = 3 \norms{x_{IN}} + 3 \norms{x_{OUT}} + 2 \sum_{i=2}^d \inner{x_{IN}}{v_j} + \sum_{2\leq i\leq j\leq d} \inner{v_i}{v_j}
\end{equation}
and $k = 2, \dotsc, d$ we get
\begin{equation}\label{equ:two}
\frac{(d+1)(d+2)^2}{2} = \norms{x_{IN}} + \norms{x_{OUT}} + \sum_{i=2}^d \inner{x_{IN}}{v_j} + \sum_{2\leq i\leq j\leq d} \inner{v_i}{v_j} + \inner{x_{IN}}{v_k} + \sum_{k \leq j \leq d} \inner{v_k}{v_j} + \norms{v_k}.
\end{equation}
Note that the only non-linear part is $\norms{x_{IN}} + \norms{x_{OUT}} = \norms{x}$. Use \eqref{equ:one} in \eqref{equ:two} to eliminate $\norms{x}$. We get
\[
\norms{x} = \frac{1}{3} \frac{(d+1)(d+2)^2}{2}  - \frac{2}{3} \sum_{i=2}^d \inner{x_{IN}}{v_j} - \frac{1}{3} \sum_{2\leq i\leq j\leq d} \inner{v_i}{v_j}.
\]
and for $k=2, \dotsc, d$,
\begin{equation}\label{equ:three}
\frac{1}{3} \inner{x_{IN}}{\sum_{j=2}^d v_j} + \inner{x_{IN}}{v_k} + \frac{2}{3} \sum_{2\leq i\leq j\leq d} \inner{v_i}{v_j} + \sum_{k \leq j \leq d} \inner{v_k}{v_j} + \norms{v_k} = \frac{2}{3} \frac{(d+1)(d+2)^2}{2}.
\end{equation}
As $x_{IN}$ lies in $\linspan \{v_2, \dotsc, v_d\}$, we can write the linear system of equations \eqref{equ:three} in the basis $\{v_2, \dotsc, v_d\}$, and $x_{IN}$ will be unique as long as the matrix of the linear system is invertible. The matrix of the system in this basis is $\ones \ones^T /3 + I$, which is invertible. Therefore, there is at most one $x_{IN}$. Given $x_{IN}$, \eqref{equ:one} determines $\norm{x_{OUT}}$ uniquely, as claimed.
\end{proof}
\begin{lemma}[First order necessary condition]\label{lem:foc}
Let $P$ be a $d$-dimensional simplicial polytope that is a local extremum of $P \mapsto L_P$. Let $F$ be a facet of $P$ with vertices $v_1, \dotsc, v_d$. Then, for $k=1, \dotsc, d$ we have
\begin{align*}
\sum_{1\leq i \leq j \leq d} (1 + \delta_{ik} + \delta_{jk}) \inner{v_i}{v_j}=\frac{(d+1)(d+2)^2}{2}.
\end{align*}
\end{lemma}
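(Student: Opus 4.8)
\emph{Proof proposal.} We may assume $P$ is in isotropic position, since $L$ is affine invariant and being a local extremum of $P\mapsto L_P$ is preserved by affine maps; thus the origin is the centroid of $P$ and $v_1,\dots,v_d$ are the position vectors of the vertices of $F$ relative to it. Fix $F$ and one of its vertices $v_{j'}$, and apply Lemma~\ref{lem:derivative} with $K=P$, with $F$ playing the role of the facet $i'$, and with this choice of $v_{j'}$; then $Z=\aff\{v_j:j\ne j'\}$ is the $(d-2)$-face of $F$ opposite $v_{j'}$ and $K_t$ hinges $F$ about $Z$. For $t$ in a two-sided neighbourhood of $0$ the set $K_t$ is a convex body varying continuously in the Hausdorff metric: for $t\le0$ it is $P$ cut by a convex wedge, and for small $t>0$ it is $P$ cut by the single halfspace bounded by the hinged hyperplane through $Z$ (the difference between this halfspace and $\{\theta\le t\}$ is a wedge at angles below $\min\theta(P)$, hence disjoint from $P$, using that $\theta(P)$ is a short interval around $0$ as in the hypotheses of Lemma~\ref{lem:differentiability}); boundedness in both cases is as in Lemma~\ref{lem:differentiability}. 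Since $t\mapsto L_{K_t}^{2d}$ is differentiable at $0$, a local extremum at $P$ forces $\evaluatedat{\tfrac{d}{dt}L_{K_t}^{2d}}{t=0}=0$.

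By Lemma~\ref{lem:derivative} this reads
\[
0=\bigl(\e_{X\leftarrow\distD}(\norms{X})-d-2\bigr)\frac{\evaluatedat{\tfrac{d}{dt}\vol K_t}{t=0}}{(\vol K)^3}.
\]
The volume factor is nonzero, hence cancels: $\evaluatedat{\tfrac{d}{dt}\vol K_t}{t=0}=\int_0^\infty\vol_{d-2}(S_{\rho,0})\,\rho\,\ud\rho$, and $S_0=P\cap\{\theta=0\}$ is exactly the facet $F$, a genuine $(d-1)$-simplex with a $(d-2)$-face on $Z$ and opposite vertex $v_{j'}$ at some distance $h>0$ from $Z$, so $\vol_{d-2}(S_{\rho,0})>0$ for $\rho\in(0,h)$. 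Therefore $\e_{X\leftarrow\distD}(\norms{X})=d+2$.

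It remains to evaluate this moment. The measure $\distD$ lives on $F=\conv(v_1,\dots,v_d)$ with density proportional to $\operatorname{dist}(x,Z)$. In barycentric coordinates $x=\sum_i\lambda_i v_i$ ($\lambda_i\ge0$, $\sum_i\lambda_i=1$) one has $\operatorname{dist}(x,Z)=h\lambda_{j'}$ and $\norms{x}=\sum_{i,j}\lambda_i\lambda_j\inner{v_i}{v_j}$, and every integral over $F$ becomes a Dirichlet integral,
\[
\int_F\lambda_1^{a_1}\cdots\lambda_d^{a_d}\,\ud x=c\,\frac{a_1!\cdots a_d!}{(a_1+\dots+a_d+d-1)!},
\]
with a common constant $c$ depending only on $F$, which cancels in $\e_{X\leftarrow\distD}(\norms{X})=\bigl(\int_F\lambda_{j'}\norms{x}\,\ud x\bigr)/\bigl(\int_F\lambda_{j'}\,\ud x\bigr)$. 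Here $\int_F\lambda_{j'}\,\ud x=c/d!$ and $\int_F\lambda_{j'}\norms{x}\,\ud x=\frac{c}{(d+2)!}\sum_{i,j}N_{ij}\inner{v_i}{v_j}$, where $N_{ij}$ is the factorial weight of $\lambda_{j'}\lambda_i\lambda_j$, equal to $6$ when $i=j=j'$, to $2$ when exactly one coincidence occurs among $\{i,j,j'\}$, and to $1$ when $i,j,j'$ are distinct. A short bookkeeping check gives $\sum_{i,j}N_{ij}\inner{v_i}{v_j}=2\sum_{1\le i\le j\le d}(1+\delta_{ij'}+\delta_{jj'})\inner{v_i}{v_j}$, so
\[
\e_{X\leftarrow\distD}(\norms{X})=\frac{2\,d!}{(d+2)!}\sum_{1\le i\le j\le d}(1+\delta_{ij'}+\delta_{jj'})\inner{v_i}{v_j}=\frac{2}{(d+1)(d+2)}\sum_{1\le i\le j\le d}(1+\delta_{ij'}+\delta_{jj'})\inner{v_i}{v_j}.
\]
Equating this with $d+2$ and writing $k$ for $j'$ yields the claimed identity; since $v_{j'}$ was an arbitrary vertex of $F$, it holds for all $k=1,\dots,d$.

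The main obstacle is not the computation but making the first-order condition an equality rather than a one-sided inequality: one must verify that $K_t$ stays a convex body when $F$ hinges slightly outward (so the variation is genuinely two-sided) and that the volume derivative does not vanish (so the moment equation is forced). Granting those, the rest is the Dirichlet-integral bookkeeping, whose only arithmetic is matching the weights $6,2,1$ with the coefficients $1+\delta_{ik}+\delta_{jk}$.
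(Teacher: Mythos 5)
Your proof is correct and follows essentially the same route as the paper: apply Lemma~\ref{lem:derivative}, use extremality (with the two-sided hinge variation and the nonvanishing volume derivative) to force $\e_{X\leftarrow\distD}(\norms{X})=d+2$, and then evaluate the $\rho$-weighted second moment over the facet. The only cosmetic difference is that you compute that moment via the Dirichlet integral formula in barycentric coordinates, whereas the paper uses the exponential-random-variable representation of the uniform distribution on a simplex; the weights ($6$, $2$, $1$) and the final identity agree.
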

\noindent\emph{Proof idea:} To first order terms and after taking care of preserving convexity, the infinitesimal hinging of a facet $F = \conv(v_1, \dotsc, v_d)$ around one of its facets $F''=\conv(v_2, \dotsc, v_d)$ is the same as adding or removing an infinitesimal layer of mass on the facet with weight proportional to barycentric coordinate $x_1$ (that is, $0$ at $F''$, $1$ at $v_1$ and interpolate affinely in $F$). 
On the other hand,
the derivative of $L_P^{2d}$, when adding infinitesimal mass to an isotropic convex body $P$, is proportional to
\[
\e (\norms{X}) - d - 2
\]
where $X$ is random according to the added mass.  This is show in Lemma \ref{lem:derivative} for our particular case. An explicit computation of $\e (\norms{X})$ completes the argument.
\begin{proof}
Lemma \ref{lem:derivative} implies the necessary condition
\[
\e_{X \leftarrow \distD}(\norms{X}) = d + 2
\]
We now compute $\e_{X \leftarrow \distD}(\norms{X})$ using the known representation of the uniform distribution over a simplex using exponential random variables. Let $Y$ be a random vector, uniformly in the standard simplex $\Delta^{d-1} = \conv\{e_i\}_{i=1}^d$. Let $T$ be an independent random scalar distributed as the sum of $d$ exponential random variables with rate 1. (An exponential random variable with rate 1 has density $e^{-t}$ supported on $t\in [0,\infty)$.) It is known that $Z:= TY$ is a random vector with independent coordinates, each distributed as exponential with rate 1. Let $V$ be the matrix having columns $v_1, \dots, v_d$. Then $VY$ is a uniformly random vector on $\conv \{v_i\}$. 

We have
\[
\e_{X \leftarrow \distD}(\norms{X}) = \frac{\e(\norms{VY} Y_k)}{\e(Y_k)},
\]
where $\e(Y_k) = 1/d$. Also, 
\[
\e(\norms{VY} Y_k ) = \frac{\e(\norms{VZ} Z_k)}{\e(T^3)},
\]
where $\e(T^3) = d(d+1)(d+2)$. Finally, 
\begin{align*}
\e(\norms{VZ} Z_1) 
&= \e\left(\Bigl\lVert\sum_i Z_i v_i\Bigr\rVert^2 Z_1\right) \\
&= \e\Bigl(\sum_{i=1}^d Z_i^2 v_i \cdot v_i Z_1 + 2 \sum_{i<j} Z_i Z_j v_i \cdot v_j Z_1 \Bigr) \\
&= \e(Z_1^3) v_1 \cdot v_1 + \sum_{i=2}^d \e(Z_1 Z_i^2) v_i \cdot v_i \\ 
&\qquad+ 2 \sum_{j=2}^d \e(Z_1^2 Z_j) v_1 \cdot v_j + 2 \sum_{1<i<j\leq d} \e(Z_1 Z_i Z_j) v_i \cdot v_j \\
&= 6 v_1 \cdot v_1 + 2 \sum_{i=2}^d v_i \cdot v_i + 4 \sum_{j=2}^d v_1 \cdot v_j + 2 \sum_{1<i<j\leq d} v_i \cdot v_j \\
&= 2\Bigl(3 v_1 \cdot v_1 + 2 \sum_{j=2}^d v_1 \cdot v_j + \sum_{1<i \leq j\leq d} v_i \cdot v_j\Bigr) \\
&= 2 \sum_{1\leq i \leq j \leq d} (1+\delta_{1i} + \delta_{1j}) v_i \cdot v_j.
\end{align*}
We get
\[
\e(\norms{VY} Y_k ) = \frac{2}{d(d+1)(d+2)} \sum_{1\leq i \leq j \leq d} (1+\delta_{ki} + \delta_{kj}) v_i \cdot v_j.
\]
The lemma follows.
\end{proof}

\section{Proof of Theorem \ref{thm:main}}

\noindent\emph{Proof idea:} An extremal isotropic simplicial polytope has a hyperplane of symmetry by Theorem \ref{thm:symmetry}. Given that the polytope is simplicial but not a simplex, one can show (Theorem \ref{thm:ccg}, a restatement of results in \cite{MR1763244}) that in this case Blaschke's Sch\"uttelung process strictly increases the isotropic constant.
\begin{proof}
Assume, without loss of generality, that $P$ is in isotropic position. 
By Theorem \ref{thm:symmetry}, $P$ has a hyperplane of symmetry $H$ containing the origin. Let $f_H$, $g_H$ be a representation of $P$ as in \eqref{equ:representation}. 
The hyperplane of symmetry $H$ implies $g_H = f_H$.
By Theorem \ref{thm:ccg}, $f_H$ is linear in $\pi_H(P)$. 
This implies that $\{(x,f_H(x)) \suchthat x \in \pi_H(P) \}$ is a facet of $P$, and therefore a $(d-1)$-dimensional simplex. 
Thus, $\pi_H(P)$ is a $(d-1)$-dimensional simplex.
If $d \leq 2$ the argument below does not work (as being simplicial is no restriction for $d=2$), but the claim of the theorem is obvious for $d=1$ and known to be true for $d=2$, as triangles are the only maximizers of the isotropic constant in that case (see e.g. \cite{MR1763244} for a proof that triangles are maximizers, and \cite{MR2760665} for a proof that they are the only maximizers, which generalizes a result from \cite{MR1203285}).
If $d >2$, the result follows from Lemma \ref{lem:simplex}, with $S = \pi_H(P)$ and $f=f_H$.
\end{proof}

\begin{lemma}\label{lem:simplex}
Let $d \geq 3$. Let $P$ be a $d$-dimensional simplicial polytope of the form $\{(x,y) \in \RR^{d-1} \times \RR \suchthat x \in S, -f(x) \leq y \leq f(x) \}$ where $S \subseteq \RR^{d-1}$ is a $(d-1)$-dimensional simplex with vertices $v_1, \dotsc, v_d \in \RR^{d-1}$ and $f : S \to \RR$ is an affine function. 
Then $P$ is a simplex.
\end{lemma}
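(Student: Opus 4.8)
The plan is to use that $f$ is affine to describe $P$ explicitly as a convex hull of few points, and then to let the simpliciality hypothesis force all but one ``vertical pair'' of those points to degenerate to a single point.

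First I would record that, since $f$ is affine, $P = \conv\{(v_i, f(v_i)),\ (v_i, -f(v_i)) : i \in [d]\}$: for a convex combination $x = \sum_i \lambda_i v_i$ one has $(x, \pm f(x)) = \sum_i \lambda_i (v_i, \pm f(v_i))$, and each fibre $\{x\}\times[-f(x),f(x)]$ is the segment joining $(x, f(x))$ to $(x, -f(x))$. Note $f \ge 0$ on $S$, since the fibre over $x\in S$ is nonempty only if $f(x)\ge 0$. Put $J = \{i : f(v_i) > 0\}$. If $J = \emptyset$, then $f \equiv 0$ on $S$ and $P = S \times \{0\}$ is $(d-1)$-dimensional, a contradiction; so $J \ne \emptyset$.

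The core of the proof is the claim $\abs{J} = 1$. Suppose $i_1 \ne i_2$ both lie in $J$. Since $d \ge 3$, choose an index $j \notin \{i_1,i_2\}$, let $F_j = \conv\{v_i : i \ne j\}$ be the corresponding facet of $S$, and set $G := \{(x,y) : x \in F_j,\ -f(x) \le y \le f(x)\}$. Then $G = P \cap \bigl(\aff(F_j)\times\RR\bigr)$ is the intersection of $P$ with a supporting hyperplane of $S\times\RR$ (which contains $P$), hence a proper face of $P$; and it is $(d-1)$-dimensional because $f(v_{i_1})>0$ makes $f\not\equiv 0$ on $F_j$. So $G$ is a facet of $P$. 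Exactly as for $P$, $G = \conv\{(v_i,f(v_i)),(v_i,-f(v_i)) : i\ne j\}$, and I would verify, using linear functionals $(x,y)\mapsto \ell(x)\pm\varepsilon y$ where $\ell$ is an affine function on $\RR^{d-1}$ peaking at a single vertex of $F_j$ and $\varepsilon>0$ is small, that each $(v_i,f(v_i))$ with $i\ne j$ is a vertex of $G$ and that, in addition, $(v_{i_1},-f(v_{i_1}))$ and $(v_{i_2},-f(v_{i_2}))$ are further, distinct vertices. This gives $G$ at least $(d-1)+2 = d+1$ vertices, so the facet $G$ is not a simplex --- contradicting that $P$ is simplicial. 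Hence $\abs{J}=1$.

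Finally, say $J = \{d\}$ with $f(v_d) = h > 0$; then $f(v_i)=0$ for $i<d$ and $P = \conv\{(v_1,0),\dots,(v_{d-1},0),(v_d,h),(v_d,-h)\}$, a set of $d+1$ points. A short linear-algebra check --- using that $v_1,\dots,v_d$ are affinely independent in $\RR^{d-1}$ and $h\ne 0$ --- shows these $d+1$ points are affinely independent, so $P$ is a $d$-simplex, as claimed. I expect the main obstacle to be the middle paragraph: one must be careful that $G$ really is a facet (genuinely $(d-1)$-dimensional, not a lower-dimensional face) and that the asserted extreme points of $G$ are not accidentally non-extreme --- this is exactly where the explicit supporting functionals and the hypothesis $d\ge 3$ (needed to keep an index $j$ away from $i_1,i_2$) are used.
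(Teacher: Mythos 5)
Your argument is essentially the paper's own proof: both intersect $P$ with the vertical hyperplane over a facet $F_j$ of $S$, observe that (when $(d-1)$-dimensional) this is a facet of the simplicial polytope $P$ and hence a simplex, and use the resulting vertex count together with $d\ge 3$ to conclude that $f(v_i)\neq 0$ for at most one $i$, whence $P$ is a simplex. You simply spell out some steps the paper leaves implicit (the convex-hull description of $P$, the verification of the extreme points of the facet, and the final affine-independence check), so the proposal is correct and takes the same route.
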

\begin{proof}
It is enough to show that $f(v_i)$ is non-zero for exactly one value of $i$. 
We will show this now. 
Let $F_1, \dotsc, F_d$ be the facets of $S$, where $F_i = \conv (\{ v_1, \dotsc, v_d\} \setminus \{v_i\})$.
Let $H= \{ (x,0) \in \RR^{d-1}\times \RR \}$.
For $i \in [d]$, let $H_i$ be the affine hull of $\pi_H^{-1}(F_i)$. 
In other words, $H_i$ is the affine hyperplane orthogonal to $H$ and containing $F_i$.
Now for every $i$, $P \cap H_i$ is a face of $P$. If $\dim P \cap H_i < d-1$, then $f(v_j) = 0$ for all $j$ different from $i$. 
If $\dim P \cap H_i = d-1$, then $P \cap H_i$ is a facet and therefore a simplex, and this implies that $f(v_j)$ is non-zero for at most one value of $j$ in $[d]\setminus\{i\}$. 
In any case, for every set of $d-1$ values in $[d]$, there can be at most one $j$ in it such that $f(v_j) \neq 0$. Assuming $d\geq 3$, this necessarily implies that there is at most one $i \in [d]$ such that $f(v_i) \neq 0$.
\end{proof}

\paragraph{Acknowledgements.} We would like to thank Artem Zvavitch for suggesting to extend the autor's preliminary version of Theorem \ref{thm:symmetry} in 3-D to arbitrary dimensions and Matthieu Fradelizi for suggesting to look at \cite{MR1763244} to find results that would complement Theorem \ref{thm:symmetry}, which lead to Theorem \ref{thm:main}. We would like to thank Matthias Reitzner for helpful discussions.

\bibliographystyle{abbrv}    
\bibliography{isohedron}
\if
\appendix
\section{f}
\[
\frac{d}{dt} \vol K_t = \frac{d}{dt} \int_{-\pi}^{t} \int_0^\infty \vol_{d-2}(S_{\rho,\theta}) \rho d\rho d\theta\\
= \int_0^\infty \vol_{d-2} (S_{\rho, t}) \rho d\rho.
\]
\[
S_{\rho, \theta} = \{ x \in P \suchthat \rho(x) = \rho, \theta(x) = \theta\}
\]

Sectional area of a convex body is logconcave, therefore continuous in int support. Areas of hinging hyperplane? Weighted by $\rho$?

Claim: $\vol K_t$ is differentiable at $t=0$. To see this, note that $\vol K_t = \int_a^t \int_0^\infty \vol_{d-2} (S_{\rho,\theta}) \rho d\rho d\theta$. If we define $a, b \in [-\pi, \pi)$ so that $\theta \in [a,b]$ iff $S_\theta \neq \emptyset$, then $f(\theta) = \int_0^\infty \vol_{d-2} (S_{\rho,\theta}) \rho d\rho$ is continuous in $(a,b)$. As $P$ is strictly larger than $K$, we have $0 \in (a,b)$ and therefore $\vol K_t = \int_a^t f(\theta) d\theta$ is differentiable at $t=0$.

Claim: $\int_{-\pi}^t \int_0^\infty \e_{X \in S_{\rho, \theta}} (\norms{X}) \vol_{d-2} (S_{\rho, \theta}) \rho \,\ud\rho \,\ud\theta$ is differentiable at $t=0$. To see this, observe that $\theta \mapsto \int_0^\infty \e_{X \in S_{\rho, \theta}} (\norms{X}) \vol_{d-2} (S_{\rho, \theta}) \rho \,\ud\rho$ is continuous in $(a,b)$ and $0 \in (a,b)$.

Proof of claim: To see this, note that $g(t) = \int_{-\pi}^t \int_0^\infty \int_{S_{\rho, \theta}} f(\rho, \theta, z) \ud z \rho \ud \rho \ud \theta$.
Let $h(t) = \int_0^\infty \int_{S_{\rho, t}} f(\rho, t, z) \ud z \rho \ud \rho$.
As $P$ is strictly larger than $K$, we have that $h(t)$ is finite and continuous in a neighborhood of $0$ (note that $h(t)$ may not be well defined for large values of $t$ if $P$ is unbounded).

We now show that $h(t)$ is finite in a neighborhood of 0. Note that $h(t)$ is the integral of the continuous function $f(\rho, t, z) \rho$ over $S_t$. The fact that the recession cone of a closed convex set like $P$ is closed implies that if $S_0$ is bounded, the so is $S_t$ for $t$ in a neighborhood of 0. The finiteness of $h(t)$ in that neighborhood follows.

We now show that $h(t)$ is continuous in a neighborhood of 0. This follows from Lebesge's dominated convergence and the continuity of the radial function of $P$ with respect to the origin of the cylindrical system of coordinates in the interior of the tangent cone. 

Therefore, $f(t) = \int_{-\pi}^t h(\theta) d\theta$ is differentiable at $t=0$.

\section{Integration over a simplex}
\begin{align*}
x
\end{align*}
\fi 
\end{document}